\def \C {\mathbb C}
\def \N {\mathbb N}
\def\cC{\mathcal{C}}
\def\cF{\mathcal{F}}
\def\cH{\mathcal{H}}
\def\cL{\mathcal{L}}
\newcommand{\expect}[1]{\ensuremath{\mathbf{E}\left(#1\right)}}
\def\one{1\!\!1}
\DeclareMathOperator*{\Dom}{Dom}
\DeclareMathOperator*{\Ran}{Ran}
\DeclareMathOperator*{\Ker}{Ker}
\renewcommand{\d}{\mathrm d}
\newcommand{\abs}[1]{\left|\,{#1}\,\right|}
\newcommand{\norm}[1]{\left\|\,{#1}\,\right\|}
\def \wt {\widetilde}
\def\wh{\widehat}
\newtheorem {theorem}{Theorem}
\newtheorem {lemma}{Lemma}
\newtheorem {proposition}{Proposition}
\newtheorem* {theorem*}{Theorem}
\newtheorem* {thm*}{Theorem}
\newtheorem* {lemma*}{Lemma}
\newtheorem* {lem*}{Lemma}
\newtheorem* {corollary*}{Corollary}
\newtheorem* {cor*}{Corollary}
\newtheorem* {proposition*}{Proposition}
\newtheorem* {prop*}{Proposition}
\newtheorem* {definition*}{Definition}
\newtheorem* {def*}{Definition}
\newtheorem* {conjecture*}{Conjecture}
\newtheorem* {theoremkv*} {Theorem KV}
\newtheorem* {theoremssc*} {Theorem SSC}
\newtheorem* {theoremgsc*} {Theorem GSC}
\theoremstyle{definition}
\newtheorem* {remark*}{Remark}
\newtheorem* {remarks}{Remarks}
\newtheorem* {rem*}{Remark}
\newtheorem* {assumption*}{Assumption}
\def\be{\begin{align}}
\def\ee{\end{align}}
\def\bea{\begin{eqnarray}}
\def\eea{\end{eqnarray}}
\newcommand{\tostoptop}{\stackrel{\mathrm{st.op.top.}}{\longrightarrow}}
\title{Relaxed sector condition}
\author{Ill\'es Horv\'ath\thanks{Institute of Mathematics, Budapest University of Technology, Egry J\'ozsef u.\ 1, 1111 Budapest, Hungary. E-mail: {\tt pollux@math.bme.hu}.} \and
B\'alint T\'oth\thanks{Institute of Mathematics, Budapest University of Technology, Egry J\'ozsef u.\ 1, 1111 Budapest, Hungary. E-mail: {\tt balint@math.bme.hu}.} \and
B\'alint Vet\H o\thanks{Institute for Applied Mathematics, Bonn University, Endenicher Allee 60, 53115 Bonn, Germany. E-mail: {\tt vetob@uni-bonn.de}.}}
\begin{document}

\maketitle

\begin{abstract}

In this note we present a new sufficient condition which guarantees martingale approximation and central limit theorem \emph{\`a la Kipnis\,--\,Varadhan} to hold for additive functionals of Markov processes. This condition, which we call the \emph{relaxed sector condition (RSC)} generalizes the strong sector condition (SSC) and the graded sector condition (GSC) in the case when the self-adjoint part of the infinitesimal generator acts diagonally in the grading. The main advantage being that the proof of the GSC in this case is more transparent and less computational than in the original versions. We also hope that the RSC may have direct applications where the earlier sector conditions don't apply. So far we don't have convincing examples in this direction.

\end{abstract}

\section{Introduction}
\label{s:intro}

The theory of central limit theorems for additive functionals of ergodic Markov processes via martingale approximation was initiated in the mid-1980-s with applications to tagged particle diffusion in stochastic interacting particle systems and various models of random walks in random environment.

The Markov process is usually assumed to be in a stationary and ergodic regime. We shall stick to these assumptions in the present note, too. There are however also other type of related results, see e.g.\ \cite{maxwell_woodroofe_00}, \cite{cuny_peligrad_12}, which use partly different techniques.

In their celebrated 1986 paper \cite{kipnis_varadhan_86}, C.\ Kipnis and S.\ R.\ S.\ Varadhan proved a central limit theorem for the reversible case with no assumptions other than the strictly necessary ones. For an early non-reversible extension see \cite{toth_86} where the martingale approximation was applied to a particular model of random walk in random environment.

The theory has since been widely extended by Varadhan and collaborators to include processes with a varying degree of non-reversibility. For a detailed account of these so-called \emph{sector conditions} and the different models they are applied to, see the surveys \cite{olla_01}, \cite{komorowski_landim_olla_12} and the more recent result \cite{horvath_toth_veto_12}.

In the present note, we introduce a new sector condition which we call the \emph{relaxed sector condition (RSC)}. Apart from appearing to be  interesting in its own right, it also provides a new, slightly improved  version of the \emph{graded sector condition (GSC)}, in the case when the self-adjoint part of the infinitesimal generator doesn't mix the subspaces of the graded Hilbert space. The proof presented here is less technical and more transparent.

\section{Setup, abstract considerations}
\label{s:setup}

We recall the non-reversible version of the abstract
Kipnis\,--\,Varadhan CLT for additive functionals of ergodic Markov processes, see \cite{kipnis_varadhan_86} and \cite{toth_86}.

Let $(\Omega, \cF, \pi)$ be a probability space: the state space of a \emph{stationary and ergodic} Markov process $t\mapsto\eta(t)$. We put ourselves in the Hilbert space $\cH:=\cL^2(\Omega, \pi)$. Denote the \emph{infinitesimal generator} of the semigroup of the process by $G$, which is
a well-defined (possibly unbounded) closed linear operator on $\cH$.

The adjoint $G^*$ is the infinitesimal generator of the semigroup of the reversed (also stationary and ergodic) process $\eta^*(t)=\eta(-t)$. It is assumed that $G$ and $G^*$ have a \emph{common core of definition} $\cC\subseteq\cH$. We denote the \emph{symmetric} and \emph{antisymmetric} parts of the generators $G$, $G^*$, by
\begin{align*}
S:=-\frac12(G+G^*),
\qquad
A:=\frac12(G-G^*).
\end{align*}
(We prefer to use the notation $S$ for the positive semidefinite operator defined above, so the infinitesimal generator will be written as $G=-S+A$.) These operators are also extended from $\cC$ by graph closure and it is assumed that they are well-defined self-adjoint, respectively, skew self-adjoint operators:
\begin{align*}
S^*=S\ge0, \qquad A^*=-A.
\end{align*}
Summarizing: it is assumed that the operators $G$, $G^*$, $S$ and $A$ have a common dense core of definition $\cC$.
Note that $-S$ is itself the infinitesimal generator of a Markovian semigroup on $\cL^2(\Omega,\pi)$, for which the probability measure $\pi$ is reversible (not just stationary). We assume that $-S$ is itself ergodic:
\begin{align*}
\Ker(S)=\{c1\!\!1 : c\in\C\}.
\end{align*}
We shall restrict ourselves to the subspace of codimension 1, orthogonal to the constant functions.

In the sequel the operators $(\lambda I+ S)^{\pm1/2}$, $\lambda\ge0$, will play an important r\^ole. These are defined by the spectral theorem applied to the self-adjoint and positive operator $S$. It is easy to see that $\cC$ is also a core for the operators $(\lambda I+ S)^{1/2}$, $\lambda\ge0$. The operators $(\lambda I+ S)^{-1/2}$, $\lambda>0$, are everywhere defined and bounded, with $\norm{(\lambda I+ S)^{-1/2}}\le\lambda^{-1/2}$. The operator $S^{-1/2}$ is defined on
\begin{align}
\label{conditionH-1}
\Dom(S^{-1/2}):=
\{f\in\cH:
\norm{S^{-1/2}f}^2
:=
\lim_{\lambda\to0}\norm{(\lambda I + S)^{-1/2}f}^2
<
\infty
\}
=
\Ran(S^{1/2}).
\end{align}
We shall refer to \eqref{conditionH-1} as the \emph{$H_{-1}$-condition}.

Let $f\in\cH$, such that $(f, \one) = \int_\Omega f\,\d\pi=0$. We ask about CLT/invariance principle for
\begin{align}
\label{rescaledintegral}
N^{-1/2}\int_0^{Nt} f(\eta(s))\,\d s
\end{align}
as $N\to\infty$.

We denote by $R_\lambda$ the resolvent of the semigroup $s\mapsto e^{sG}$:
\begin{align}
\label{resolvent_def}
R_\lambda
:=
\int_0^\infty e^{-\lambda s} e^{sG}\d s
=
\big(\lambda I-G\big)^{-1}, \qquad \lambda>0,
\end{align}
and given $f\in\cH$ as above, we will use the notation
\begin{align*}
u_\lambda:=R_\lambda f.
\end{align*}

The following theorem is direct extension to general non-reversible setup of the Kipnis\,--\,Varadhan theorem from  \cite{kipnis_varadhan_86}. It yields the efficient martingale approximation of the additive functional \eqref{rescaledintegral}. To the best of our knowledge this non-reversible extension appears first in \cite{toth_86}.

\begin{theoremkv*}
\label{thm:kv}
With the notation and assumptions as before, if the following two limits hold in $\cH$:
\begin{align}
\label{conditionA}
&
\lim_{\lambda\to0}
\lambda^{1/2} u_\lambda=0,
\\
\label{conditionB}
&
\lim_{\lambda\to0} S^{1/2} u_\lambda=:v\in\cH,
\end{align}
then
\begin{align*}
\sigma^2
:=
2\lim_{\lambda\to0}(u_\lambda,f)
=
2\norm{v}^2
\in
[0,\infty),
\end{align*}
exists, and there also exists a zero mean, $\cL^2$-martingale $M(t)$ adapted to the filtration of the Markov process $\eta(t)$ with stationary and ergodic increments and variance
\begin{align*}
\expect{M(t)^2}=\sigma^2t
\end{align*}
such that
\begin{align*}
\lim_{N\to\infty} N^{-1} \expect{\big(\int_0^N
f(\eta(s))\,\d s-M(N)\big)^2} =0.
\end{align*}
In particular, if $\sigma>0$, then the finite dimensional marginal distributions of the rescaled process $t\mapsto \sigma^{-1} N^{-1/2}\int_0^{Nt}f(\eta(s))\,\d s$ converge to those of a standard $1d$ Brownian motion.
\end{theoremkv*}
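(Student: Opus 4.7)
The plan is to build the approximating martingale from the resolvent function $u_\lambda$ via Dynkin's formula, to check that these approximants form a Cauchy family in $\cL^2$ as $\lambda\to 0$ using \eqref{conditionB}, and to couple the scales $\lambda\to 0$ and $N\to\infty$ so that the remaining error terms vanish after dividing by $N^{1/2}$, using \eqref{conditionA}.

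First, since $u_\lambda = R_\lambda f \in \Dom(G)$, the resolvent equation reads $Gu_\lambda = \lambda u_\lambda - f$, so Dynkin's formula produces a martingale
\begin{align*}
M_\lambda(t) := u_\lambda(\eta(t)) - u_\lambda(\eta(0)) - \int_0^t Gu_\lambda(\eta(s))\,\d s
\end{align*}
with stationary ergodic increments (inherited from $\eta$). Substituting $Gu_\lambda = \lambda u_\lambda - f$ and rearranging gives the key identity
\begin{align*}
\int_0^t f(\eta(s))\,\d s = M_\lambda(t) + \big[u_\lambda(\eta(0)) - u_\lambda(\eta(t))\big] + \lambda\int_0^t u_\lambda(\eta(s))\,\d s.
\end{align*}
The quadratic variation of $M_\lambda$ is $\expect{M_\lambda(t)^2} = -2t\la u_\lambda, Gu_\lambda\ra = 2t\norm{S^{1/2}u_\lambda}^2$, the skew-adjoint part dropping out via $\la u_\lambda, A u_\lambda\ra=0$. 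Repeating the computation for $u_\lambda - u_\mu$ yields $\expect{(M_\lambda(t)-M_\mu(t))^2} = 2t\norm{S^{1/2}(u_\lambda - u_\mu)}^2 \to 0$ as $\lambda,\mu\to 0$ by \eqref{conditionB}; hence $\{M_\lambda(t)\}_{\lambda>0}$ is Cauchy in $\cL^2$, and its limit $M(t)$ is a martingale with stationary ergodic increments and $\expect{M(t)^2} = 2t\norm{v}^2$. The formula
\begin{align*}
(u_\lambda, f) = \la u_\lambda, \lambda u_\lambda - Gu_\lambda\ra = \lambda\norm{u_\lambda}^2 + \norm{S^{1/2}u_\lambda}^2
\end{align*}
combined with \eqref{conditionA} and \eqref{conditionB} identifies $\sigma^2 = 2\lim_{\lambda\to 0}(u_\lambda, f) = 2\norm{v}^2$.

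To reach the $\cL^2$-approximation I would set $\lambda = 1/N$ in the key identity at $t=N$. By \eqref{conditionA}, $\lambda^{1/2}\norm{u_\lambda} = o(1)$, so the boundary term has $\cL^2$-norm bounded by $2\norm{u_{1/N}} = o(N^{1/2})$, and the integral term by $\lambda N\norm{u_{1/N}} = \norm{u_{1/N}} = o(N^{1/2})$ (via Cauchy-Schwarz in time and stationarity). The martingale difference $M_{1/N}(N) - M(N)$ has $\cL^2$-norm at most $\sqrt{2N}\,\norm{S^{1/2}u_{1/N} - v} = o(N^{1/2})$ by \eqref{conditionB}. Combining the three bounds and dividing by $N$ gives $N^{-1}\expect{\big(\int_0^N f(\eta(s))\,\d s - M(N)\big)^2}\to 0$. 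The invariance principle for $t\mapsto \sigma^{-1}N^{-1/2}\int_0^{Nt}f(\eta(s))\,\d s$ then follows by applying the standard martingale CLT for square-integrable martingales with stationary ergodic increments to $M(t)$ and transferring the result via this approximation.

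The main obstacle, in my view, is the proper coupling of the two limit procedures: the key identity is exact for every fixed $\lambda > 0$ but contains the unwanted terms $\lambda\int u_\lambda\,\d s$ and $u_\lambda(\eta(t)) - u_\lambda(\eta(0))$, which only become negligible on the scale $\sqrt{N}$ if $\lambda\to 0$ at the right rate relative to $N$; the choice $\lambda = 1/N$ is precisely what makes \eqref{conditionA} absorb both simultaneously. The remaining technical point is legitimizing the quadratic variation formula $\expect{M_\lambda(t)^2} = 2t\norm{S^{1/2}u_\lambda}^2$, which is the standard \emph{carré du champ} calculation relying on the stationarity of $\pi$ and the skew-adjointness of $A$ on the common core $\cC$.
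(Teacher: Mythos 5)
Your argument is correct and follows essentially the same route the paper intends: Theorem KV is stated there without proof, with the remark that the non-reversible version is proved by martingale approximation via resolvent calculus as in \cite{toth_86} and \cite{kipnis_varadhan_86}, and that is exactly what you carry out (Dynkin martingale for $u_\lambda=R_\lambda f$, the Cauchy property of $M_\lambda$ from \eqref{conditionB}, and the coupling $\lambda=1/N$ so that \eqref{conditionA} kills the boundary and integral remainders). The only points requiring routine extra care are the domain issues behind $\expect{M_\lambda(t)^2}=2t\norm{S^{1/2}u_\lambda}^2$ (which you flag yourself) and the verification that the $\cL^2$-limit $M$ retains the martingale property and stationary ergodic increments.
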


\begin{remarks}

\begin{enumerate}[$\circ$]

\item
For the historical record it should be mentioned that the idea of martingale approximation and an early variant of this theorem under the much more restrictive condition
$f\in\mathrm{Ran}(G)$, appears in \cite{gordin_lifshits_78}. For more exhaustive historical account and bibliography of the problem see the recent monograph \cite{komorowski_landim_olla_12}.

\item
The reversible case, when $A=0$, was considered in the celebrated paper \cite{kipnis_varadhan_86}. In that case conditions \eqref{conditionA} and \eqref{conditionB} are equivalent. The proof of the Theorem KV in the reversible case relies on spectral calculus.

\item
Conditions \eqref{conditionA} and \eqref{conditionB} of Theorem KV are jointly equivalent to the following
\begin{align}
\label{conditionC}
\lim_{\lambda,\lambda'\to0}(\lambda+\lambda')(u_\lambda,u_{\lambda'})=0.
\end{align}
Indeed, straightforward computations yield:
\begin{align*}
(\lambda+\lambda')(u_\lambda,u_{\lambda'}) =
\norm{S^{1/2}(u_\lambda-u_{\lambda'})}^2 + \lambda \norm{u_\lambda}^2 +
\lambda' \norm{u_{\lambda'}}^2.
\end{align*}

\item
The non-reversible formulation appears -- in discrete-time Markov chain, rather than continuous-time Markov process setup and with condition \eqref{conditionC} -- in \cite{toth_86} where it was applied, with bare hands computations, to obtain CLT for a particular random walk in random environment. Its proof mainly follows the original proof of the Kipnis\,--\,Varadhan theorem from \cite{kipnis_varadhan_86} with the difference that spectral calculus is replaced by resolvent calculus.

\item
In continuous-time Markov process setup, it was formulated in \cite{varadhan_96} and applied to tagged particle motion in non-reversible zero mean exclusion processes. In this paper, the \emph{strong sector condition (SSC)} was formulated, which, together with the $H_{-1}$-condition \eqref{conditionH-1} on the function $f\in\cH$, provide sufficient conditions for \eqref{conditionA} and \eqref{conditionB} of Theorem KV to hold.

\item
In \cite{sethuraman_varadhan_yau_00}, the so-called \emph{graded sector condition (GSC)} was formulated and Theorem KV was applied to tagged particle diffusion
in general (non-zero mean) non-reversible exclusion processes, in $d\ge3$. The fundamental ideas related to the GSC have their origin partly in \cite{landim_yau_97}.

\item
For a list of applications of Theorem KV together with the SSC and GSC, see the surveys \cite{olla_01}, \cite{komorowski_landim_olla_12}, and for a more recent application of the GSC to the so-called \emph{myopic self-avoiding walks} and \emph{Brownian polymers}, see \cite{horvath_toth_veto_12}.
\end{enumerate}
\end{remarks}

\section{Sector conditions}
\label{s:sc}

In subsection \ref{ss:ssc&gsc} we recall the SSC and the GSC. In subsection \ref{ss:rsc} we formulate the RSC, which is the main abstract result of this note and, as a consequence, a slightly improved version of GSC. In further sections we first prove the RSC, then we show how the SSC and GSC follow in a very natural way from RSC. The main gain is not in slightly weakening the conditions but in simplifying the proof of GSC.

\subsection{Strong and graded sector conditions}
\label{ss:ssc&gsc}

From abstract functional analytic considerations, it follows that the $H_{-1}$-condition \eqref{conditionH-1} together with the following bound jointly imply \eqref{conditionC}, and hence the martingale approximation and CLT of Theorem KV:
\begin{align}
\label{conditionD}
\sup_{\lambda>0}
\norm{ S^{-1/2} G u_\lambda} < \infty.
\end{align}

\begin{theoremssc*}
\label{thm:ssc}
With notations as before, if there exists a constant $C<\infty$ such that for any $\varphi, \psi\in \cC$, the common core of $S$ and $A$,
\begin{equation}
\label{ssc}
\abs{(\psi,A\varphi)}^2 \le C^2 (\psi, S\psi) (\varphi, S \varphi),
\end{equation}
then for any $f\in\cH$ for which \eqref{conditionH-1} holds, \eqref{conditionD} also follows. So for every function $f$ for which \eqref{conditionH-1} holds, the martingale approximation and CLT of Theorem KV applies automatically.
\end{theoremssc*}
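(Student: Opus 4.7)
The plan is to verify the uniform bound (\ref{conditionD}) directly; given that, Theorem KV applies through the abstract implication invoked just before the statement. The key reformulation is that the sector condition (\ref{ssc}) says precisely that the sesquilinear form $(\psi, A\varphi)$ is bounded on $\cC \times \cC$ when both arguments are measured in the form norm $\|S^{1/2}\cdot\|$. Since $\cC$ is a core for $S^{1/2}$, this form extends continuously to $H_1 := \Dom(S^{1/2})$ equipped with $\|S^{1/2}\cdot\|$, and by duality $A$ defines a bounded operator from $H_1$ into its dual $H_{-1}$; concretely, for every $\varphi$ in the form domain,
\begin{align*}
\|S^{-1/2} A \varphi\| \le C\, \|S^{1/2} \varphi\|.
\end{align*}
Equivalently, the densely defined operator $S^{-1/2} A S^{-1/2}$ extends by boundedness to a contraction of norm at most $C$ on $\cH$.

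First I would derive an energy estimate for $u_\lambda = R_\lambda f$. The resolvent identity gives $(\lambda I + S - A) u_\lambda = f$. Taking the inner product with $u_\lambda$ and using skew-adjointness of $A$ (which forces $\mathrm{Re}(u_\lambda, A u_\lambda) = 0$) yields
\begin{align*}
\lambda \|u_\lambda\|^2 + \|S^{1/2} u_\lambda\|^2 = \mathrm{Re}(u_\lambda, f) \le \|S^{1/2} u_\lambda\| \cdot \|S^{-1/2} f\|,
\end{align*}
where in the last step I invoke the $H_{-1}$-condition (\ref{conditionH-1}) together with Cauchy--Schwarz applied to $(u_\lambda, f) = (S^{1/2} u_\lambda,\, S^{-1/2} f)$. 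This immediately gives the uniform-in-$\lambda$ bound $\|S^{1/2} u_\lambda\| \le \|S^{-1/2} f\|$, as well as $\lambda \|u_\lambda\|^2 \le \|S^{-1/2} f\|^2$.

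The last step simply combines the two ingredients via the splitting $G = -S + A$:
\begin{align*}
\|S^{-1/2} G u_\lambda\|
\le \|S^{-1/2} S u_\lambda\| + \|S^{-1/2} A u_\lambda\|
\le \|S^{1/2} u_\lambda\| + C\,\|S^{1/2} u_\lambda\|
\le (1+C)\,\|S^{-1/2} f\|,
\end{align*}
which is exactly (\ref{conditionD}). The main technical obstacle I anticipate is the bookkeeping of operator domains: one has to justify that $u_\lambda \in \Dom(G)$ actually lies in $\Dom(S^{1/2})$ in a sense that legitimizes the energy identity, and that the dualized sector bound applies to $u_\lambda$ rather than just to elements of $\cC$. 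Both are standard core/density arguments within the abstract KV framework, and none of the computation is deep; the entire content of the proof is the reinterpretation of (\ref{ssc}) as the bounded-operator statement $\|S^{-1/2} A S^{-1/2}\| \le C$.
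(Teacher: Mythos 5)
Your proof is correct, but it is not the route the paper takes. What you give is essentially the classical proof of the strong sector condition (the one in \cite{varadhan_96}, \cite{olla_01}, \cite{komorowski_landim_olla_12}): read \eqref{ssc} as the bounded-operator statement $\norm{S^{-1/2}AS^{-1/2}}\le C$, extract the uniform energy bound $\norm{S^{1/2}u_\lambda}\le\norm{S^{-1/2}f}$ from the real part of the resolvent identity $(\lambda I+S-A)u_\lambda=f$, and combine the two through $G=-S+A$ to get \eqref{conditionD} with the explicit constant $(1+C)\norm{S^{-1/2}f}$. This matches the literal statement of the theorem, which asserts \eqref{conditionD}, and then you lean on the unproved-here ``abstract functional analytic considerations'' that \eqref{conditionH-1} together with \eqref{conditionD} imply \eqref{conditionC}. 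The paper instead obtains Theorem SSC as a two-line corollary of Theorem \ref{thm:rsc}: since $B=S^{-1/2}AS^{-1/2}$ is \emph{bounded}, it is automatically skew self-adjoint (the hard hypothesis \eqref{dense_range} is free), and \eqref{strongconvergence} gives the strong convergence $B_\lambda\tostoptop B$, so the RSC machinery yields \eqref{conditionA} and \eqref{conditionB} directly, bypassing \eqref{conditionD} altogether. Your approach buys self-containedness, an explicit constant, and no reliance on the RSC theorem; the paper's buys brevity and, more importantly, is the template that scales up to the graded sector condition in Proposition \ref{prop:gsc_from_rsc}. One caveat on your write-up: the domain bookkeeping you flag (that $u_\lambda\in\Dom(G)$ lies in $\Dom(S^{1/2})$ with $\mathrm{Re}(u_\lambda,Gu_\lambda)=-\norm{S^{1/2}u_\lambda}^2$, and that the dualized bound $\norm{S^{-1/2}Au_\lambda}\le C\norm{S^{1/2}u_\lambda}$ applies to $u_\lambda$ and not just to elements of $\cC$) is genuinely where the work sits in a careful version of this argument; it is standard and handled in the cited references, but it should not be waved off entirely, since $u_\lambda$ need not lie in $\Dom(S)\cap\Dom(A)$ and the splitting $G=-S+A$ only holds a priori on the core.
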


\begin{remark*}

\begin{enumerate}[$\circ$]

\item
Condition \eqref{ssc} is equivalent to requiring that the operator $S^{-1/2} A S^{-1/2}$ defined on the dense subspace $S^{1/2}\cC:=\{S^{1/2}\varphi: \varphi\in\cC\}$ be bounded in norm by the constant $C$. Hence, by continuous extension, condition \eqref{ssc} is the same as
\begin{equation}
\label{ssc2}
\norm{S^{-1/2}AS^{-1/2}}\le C<\infty.
\end{equation}

\end{enumerate}

\end{remark*}
For the GSC, assume that the Hilbert space $\cH=\cL^2(\Omega, \pi)$ is graded
\begin{align}
\label{grading}
\cH=\overline{\oplus_{n=0}^\infty\cH_n}
\end{align}
where $\cH_0$ is the 1-dimensional subspace of constant functions. Since we work with functions $f$ for which $\int_\Omega f\,\d\pi=0$, we exclude the subspace $\cH_0$ from $\cH$ without changing the notation (and thus abusing it slightly).

Also, assume that the infinitesimal generator is consistent with the grading in the following sense:
\begin{align}
\label{Sgrading}
&
S=\sum_{n\ge1} S_{n,n},
&&
S_{n,n}:\cH_n\to\cH_n,
&&
S_{n,n}^*=S_{n,n}\ge0,
\\
\label{Agrading}
&
A=\sum_{ \substack{m,n\ge1 \\ \abs{n-m}\le r} } A_{m,n},
&&
A_{m,n}:\cH_n\to\cH_m,
&&
A_{m,n}^*=-A_{n,m}
\end{align}
where $r$ is a fixed finite integer. This means that the operator $S$ acts diagonally on the grading (doesn't mix the subspaces $\cH_n$), while the operator $A$ only mixes subspaces whose indices are closer than a fixed finite amount.  The operators $S_{n,n}$ and $A_{m,n}$ are not necessarily bounded. $\cC_n= \cC\cap \cH_n$ is a common core for them.

\begin{theoremgsc*}
\label{thm:gsc}
Let the Hilbert space and the infinitesimal generator be graded in the sense specified above. If there exist $\kappa<\infty$, $\beta<1$ and $C<\infty$ such that for any $n,m\in\N$ and $\psi_m\in \cC_m$, $\varphi_n\in\cC_n$ the following bounds hold:
\begin{align}
\label{gsc}
\abs{(\psi_m, A_{m,n}\varphi_n)}^2
\le
C^2
\left(\delta_{m,n}n^{2\kappa}+ (1-\delta_{m,n})n^{2\beta}\right)
(\psi_m, S_{m,m} \psi_m) (\varphi_n, S_{n,n} \varphi_n),
\end{align}
then, for any function $f\in \oplus_{n=0}^\infty \cH_n$ (no closure!), for which \eqref{conditionH-1} holds, \eqref{conditionD} also follows. As a consequence, for these functions the martingale approximation and CLT of Theorem KV hold.

The statement remains valid for $\beta=1$ if $C$ is sufficiently small.
\end{theoremgsc*}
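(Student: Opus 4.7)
The plan is to deduce the GSC as a corollary of the RSC (established in the preceding subsection) by verifying that its hypotheses are automatic under \eqref{Sgrading}, \eqref{Agrading}, and \eqref{gsc}. A useful preliminary reduction: since $f$ is assumed to lie in the algebraic direct sum $\oplus_{n=0}^\infty \cH_n$ and not its closure, only finitely many graded components $f_n$ of $f$ are nonzero; say $f_n = 0$ for $n > N$. Combined with the $H_{-1}$-condition $\sum_n \|S_{n,n}^{-1/2} f_n\|^2 < \infty$, this gives a finite-in-grading support that is useful either for bootstrapping or for controlling a resolvent iteration.

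Next I would translate the quadratic form bound \eqref{gsc} into an operator norm statement, exactly as \eqref{ssc} was translated to \eqref{ssc2}. Define $T_{m,n} := S_{m,m}^{-1/2} A_{m,n} S_{n,n}^{-1/2}$ on $S_{n,n}^{1/2} \cC_n$; extending by continuity, \eqref{gsc} is equivalent to
\[
\|T_{m,n}\| \le C \bigl(\delta_{m,n} n^\kappa + (1-\delta_{m,n}) n^\beta\bigr), \qquad T_{m,n}^* = -T_{n,m},
\]
with $T_{m,n} = 0$ for $|m-n| > r$. In this form, $T$ is a block-banded operator matrix whose diagonal blocks are skew-symmetric on each $\cH_n$ and whose off-diagonal blocks have norms growing strictly sub-linearly in $n$.

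To invoke the RSC I would introduce a diagonal comparison operator $W$ acting on $\cH_n$ by multiplication by $w_n := (1+n)^\gamma$ with $\beta < \gamma$. Two features make this choice natural: $A$ only couples levels $|m-n| \le r$, so the ratios $w_m/w_n$ are uniformly bounded wherever $T_{m,n} \ne 0$; and the weighted off-diagonal blocks $W_m^{-1/2} T_{m,n} W_n^{-1/2}$ then have norm of order $n^{\beta-\gamma}$, decaying as $n \to \infty$. The diagonal blocks $A_{n,n}$, whose naive sector constant is the growing $Cn^\kappa$, contribute nothing to the real part of the quadratic forms appearing in the RSC energy estimate because of skew-symmetry; this structural cancellation is precisely what the RSC is built to exploit, and it is what distinguishes the present setting from the SSC (where one would be forced to take $\kappa = 0$).

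The main obstacle, and the point where the approach differs from the direct GSC proof in \cite{sethuraman_varadhan_yau_00}, is reconciling this unbounded diagonal blow-up with the abstract resolvent bound \eqref{conditionD}. The RSC is formulated precisely to package this mechanism abstractly, so once the decomposition $A = A_{\mathrm{diag}} + A_{\mathrm{off}}$ and the weighting above are in place, the verification reduces to a weighted $\ell^2$ estimate that is geometrically convergent because of the $n^{\beta-\gamma}$ decay and the finite support of $f$ in the grading. In the borderline case $\beta = 1$, the weighting cannot produce genuine decay, but sufficiently small $C$ renders the weighted off-diagonal operator a strict contraction, and a Neumann series (equivalently, a Banach fixed-point argument) closes the proof within the same framework.
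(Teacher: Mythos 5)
You correctly translate \eqref{gsc} into block operator-norm bounds $\norm{S_{m,m}^{-1/2}A_{m,n}S_{n,n}^{-1/2}}\le C(\delta_{m,n}n^\kappa+(1-\delta_{m,n})n^\beta)$ and you correctly intuit that skew-symmetry neutralizes the growing diagonal, but the proposal never addresses the one hypothesis of Theorem \ref{thm:rsc} that is actually hard and that the paper's remark explicitly flags as the crux: proving that $B=S^{-1/2}AS^{-1/2}$, defined on $\wt\cC=\oplus_{n\ge1}\cH_n$, is \emph{essentially skew self-adjoint}, i.e.\ that $\overline{\Ran(I\pm B)}=\cH$ as in \eqref{dense_range}. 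Theorem \ref{thm:rsc} has no ``energy estimate''; its two hypotheses are essential skew self-adjointness of $B$ and the strong convergence \eqref{Blambdalimit} (the latter is easy, blockwise as in \eqref{strongconvergence}). Your substitute machinery --- a diagonal weight $W$ with $w_n=(1+n)^\gamma$, $\gamma>\beta$, followed by a Neumann series / contraction argument --- does not close this gap: $B$ is unbounded (its diagonal blocks grow like $n^\kappa$ and its off-diagonal blocks like $n^\beta$), so no Neumann series for $(I-B)^{-1}$ converges in the original norm, and passing to the weighted norm destroys the skew-adjointness that would make $(I-B_{\mathrm{diag}})^{-1}$ a contraction. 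What you are describing is essentially the computational resolvent-iteration proof of \cite{sethuraman_varadhan_yau_00} (where weights and the finite grading support of $f$ genuinely enter), not a verification of the RSC hypotheses; in the RSC route the finite support of $f$ plays no role at all.

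The missing idea is the paper's short soft argument for \eqref{dense_range} (Proposition \ref{prop:gsc_from_rsc} with $d_n=Cn^\kappa$, $c_n=Cn^\beta$, so that $\sum_n c_n^{-1}=\infty$ for $\beta\le1$, cf.\ \eqref{sequences}): suppose $\varphi\perp\Ran(I-B)$ and pair with the truncations $\varphi^n=(\varphi_1,\dots,\varphi_n,0,\dots)$. Skew-Hermiticity makes $\Re(\varphi^n,B\varphi^n)=0$, so only the boundary term survives and
\begin{align*}
0=\Re\,(\varphi,(I-B)\varphi^n)=\norm{\varphi^n}^2-\Re\,(\varphi_{n+1},B_{n+1,n}\varphi_n),
\end{align*}
whence $\norm{\varphi_n}^2+\norm{\varphi_{n+1}}^2\ge c_n^{-1}\norm{\varphi}^2$ for all large $n$; summing over $n$ and using $\sum_n c_n^{-1}=\infty$ forces $\varphi=0$. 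This single estimate is where the off-diagonal growth rate $n^\beta$ with $\beta\le1$ is used and where the diagonal rate $n^\kappa$ drops out entirely; it also shows that the $\beta=1$ case needs no smallness of $C$ in this approach, contrary to the Neumann-series mechanism you propose for that borderline case.
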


\begin{remarks}

\begin{enumerate}[$\circ$]

\item
Condition \eqref{gsc} is equivalent to requiring that the operators
\[
S_{m,m}^{-1/2} A_{m,n} S_{n,n}^{-1/2}: \cH_n \to \cH_m
\]
be bounded, with norm bounds
\begin{equation}
\norm{S_{m,m}^{-1/2} A_{m,n} S_{n,n}^{-1/2}}
\le
C
\left(\delta_{m,n}n^{\kappa}+ (1-\delta_{m,n})n^{\beta}\right)
\end{equation}

\item
There exists a stronger version of Theorem GSC, where it is not required that the self-adjoint part acts diagonally on the grading, see \cite{sethuraman_varadhan_yau_00}, \cite{komorowski_landim_olla_12}, or \cite{horvath_toth_veto_12} for the sharpest formulation. Our simplified proof seems to work smoothly only in the case when $S$ acts diagonally in the grading.

\end{enumerate}

\end{remarks}

\subsection{Relaxed sector condition}
\label{ss:rsc}

Let, as before,  $\cC\subset\cH$ be a common core for the operators  $G$, $G^*$, $S$ and $A$. Note that for any $\lambda>0$, $\cC\subseteq\Dom((\lambda I + S)^{1/2})$ and the subspace
\[
(\lambda I+S)^{1/2}\cC:=
\{(\lambda I+S)^{1/2}\varphi:\varphi\in\cC\}
\]
is dense in $\cH$. The operators
\begin{equation}
\label{Blambda_def}
B_\lambda: (\lambda I +S)^{1/2}\cC \to\cH,
\quad
B_\lambda:=(\lambda I + S)^{-1/2} A (\lambda I + S)^{-1/2},
\qquad
\lambda>0
\end{equation}
are densely defined and skew-Hermitian, and thus closable. Actually it is the case that they are not only skew-Hermitian, but essentially skew self-adjoint on $(\lambda I+S)^{1/2}\cC$. Indeed, let $\chi\in\cC$, $\varphi=(\lambda I+S)^{1/2}\chi$ and $\psi\in\cH$, then
\[
(\psi, (I\pm B_\lambda) \varphi)
=
((\lambda I+ S)^{-1/2}\psi, (\lambda I + S \pm A) \chi).
\]
So,  $\psi\perp\Ran(I\pm B_\lambda)$ implies $(\lambda I+ S)^{-1/2}\psi \perp \Ran (\lambda I + S \pm A)$ and thus, since the operators $S\pm A$ are Hille-Yosida-type, $(\lambda I+ S)^{-1/2}\psi= 0$, and consequently $\psi=0$ holds. That is $\Ran(I\pm B_\lambda)$ is dense in $\cH$. By slight abuse of notation we shall denote by the same symbol $B_\lambda$ the skew self-adjoint operators obtained by closure of the operators defined in \eqref{Blambda_def}.

The main point of the following theorem is that if there exists another skew self-adjoint operator $B$, \emph{formally} identified as \begin{equation}
\label{B_def}
B:=S^{-1/2} A S^{-1/2},
\end{equation}
and a sufficiently large subspace on which the sequence of operators $B_\lambda$ converges pointwise (strongly) to $B$, as $\lambda\to0$, then, the  $H_{-1}$-condition \eqref{conditionH-1} implies    \eqref{conditionA} and \eqref{conditionB}, and thus the martingale approximation and CLT of Theorem KV follow.

\begin{theorem}[\bf Relaxed sector condition]
\label{thm:rsc}
Assume that there exist a subspace $\wt\cC\subseteq \cap_{\lambda>0} \Dom (B_\lambda)$ which is still dense in $\cH$ and an operator $B:\wt\cC\to \cH$ which is essentially skew self-adjoint and such that for any vector $\varphi\in \wt\cC$
\begin{align}
\label{Blambdalimit}
\lim_{\lambda\to 0}\norm{B_\lambda\varphi-B\varphi}=0.
\end{align}
Then, the  $H_{-1}$-condition \eqref{conditionH-1} implies \eqref{conditionA} and \eqref{conditionB}, and thus the martingale approximation and CLT of Theorem KV follow.
\end{theorem}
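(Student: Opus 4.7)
The plan is to recast the resolvent equation for $u_\lambda$ into an equation involving $B_\lambda$ and then pass to the limit $\lambda\to 0$ via strong resolvent convergence. Concretely, I set $v_\lambda := (\lambda I + S)^{1/2} u_\lambda$ and $g_\lambda := (\lambda I + S)^{-1/2} f$. Multiplying the resolvent identity $(\lambda I + S - A) u_\lambda = f$ on the left by $(\lambda I + S)^{-1/2}$ and recognising $B_\lambda$ as in \eqref{Blambda_def} yields
\begin{align*}
(I - B_\lambda) v_\lambda = g_\lambda .
\end{align*}
Since $B_\lambda$ is skew self-adjoint, $(I - B_\lambda)^{-1}$ is a contraction on $\cH$, so that $v_\lambda = (I - B_\lambda)^{-1} g_\lambda$ is well-defined (matching $(\lambda I + S)^{1/2} u_\lambda$ by uniqueness of the resolvent, after the usual domain argument using density of $\cC$).

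Two ingredients are then needed to show that $v_\lambda$ converges in $\cH$. First, the $H_{-1}$-condition \eqref{conditionH-1} combined with the ergodicity assumption $\Ker(S)=\{0\}$ on the subspace under consideration gives, via spectral calculus applied to $S$, the strong convergence $g_\lambda \to g := S^{-1/2} f$ in $\cH$. Second --- and this is the main obstacle --- I must upgrade the assumed pointwise convergence $B_\lambda\varphi \to B\varphi$ on the dense subspace $\wt\cC$ to the strong resolvent convergence $(I - B_\lambda)^{-1} \to (I - B)^{-1}$ on all of $\cH$, where $B$ now denotes the skew self-adjoint closure. Because $iB_\lambda$ and $iB$ are self-adjoint and $\wt\cC$ is, by hypothesis, a core of $B$, this is a direct application of the classical Trotter--Kato resolvent approximation theorem. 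Combining these two facts with the uniform contraction bound $\|(I-B_\lambda)^{-1}\|\le 1$ yields
\begin{align*}
v_\lambda \to v := (I - B)^{-1} g \quad \text{strongly in } \cH .
\end{align*}

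Finally, I recover \eqref{conditionA} and \eqref{conditionB} from $v_\lambda \to v$ by writing
\begin{align*}
\lambda^{1/2} u_\lambda = \lambda^{1/2}(\lambda I + S)^{-1/2} v_\lambda ,
\qquad
S^{1/2} u_\lambda = S^{1/2}(\lambda I + S)^{-1/2} v_\lambda .
\end{align*}
Both multipliers are contractions by the spectral theorem and, using once more that the spectral measure of $S$ has no atom at $0$ on our subspace, they converge strongly in $\cH$ to $0$ and to the identity, respectively. Together with $v_\lambda \to v$ this gives $\lambda^{1/2} u_\lambda \to 0$ and $S^{1/2} u_\lambda \to v$, which are precisely \eqref{conditionA} and \eqref{conditionB}, so Theorem~KV applies and delivers the martingale approximation and CLT.
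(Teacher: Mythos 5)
Your proposal is correct and follows essentially the same route as the paper: the identity $(I-B_\lambda)v_\lambda=g_\lambda$ is exactly the paper's factorization $R_\lambda=(\lambda+S)^{-1/2}(I-B_\lambda)^{-1}(\lambda+S)^{-1/2}$, and the passage from pointwise convergence on the core to strong convergence of $(I-B_\lambda)^{-1}$ is precisely the content of the paper's Lemma~\ref{lem:strrescvg}, which you replace by a citation of the classical Trotter--Kato theorem (the paper itself notes the kinship with Theorem VIII.25(a) of Reed--Simon). The final step recovering \eqref{conditionA} and \eqref{conditionB} from the contractions $\lambda^{1/2}(\lambda+S)^{-1/2}$ and $S^{1/2}(\lambda+S)^{-1/2}$ is identical to the paper's Lemma~\ref{lem:Klambdastcvg}.
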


\begin{remarks}
\begin{enumerate}[$\circ$]

\item
Finding the appropriate subspace $\wt\cC$ and defining the skew-Hermitian operator $B:\wt\cC\to\cH$ comes naturally. The difficulty in applying this criterion lies in proving that the operator $B$ is not just skew-Hermitian, but actually skew self-adjoint. That is, proving that \begin{equation}
\label{dense_range}
\overline{\Ran(I\pm B)}=\cH.
\end{equation}
This is the counterpart of \emph{the basic criterion of self-adjointness}. See e.g.\ Theorem VIII.3. of \cite{reed_simon_vol1_vol2_75}. Checking this is typically not easy in concrete cases.

\item
The statement and the proof of this theorem show close similarities with the Trotter-Kurtz theorem. See Theorem 2.12 in \cite{liggett_85}.

\item
Theorem SSC follows directly: In this case the operator $B$ is actually \emph{bounded} and thus automatically skew self-adjoint, not just skew-Hermitian. In order to see \eqref{Blambdalimit} note that
\begin{align}
\label{strongconvergence}
B_\lambda
=
S^{1/2}(\lambda I + S)^{-1/2} B  S^{1/2}(\lambda I + S)^{-1/2}
\tostoptop
B,
\end{align}
where $\tostoptop$ denotes convergence in the strong operator topology.
\end{enumerate}
\end{remarks}
As a direct consequence we formulate a slightly stronger version of Theorem GSC. The main advantage is actually in the proof: our proof is considerably less computational, more transparent and natural than the original one from \cite{sethuraman_varadhan_yau_00}, reproduced in a streamlined way in \cite{olla_01} and \cite{komorowski_landim_olla_12}.

Assume the setup of Theorem GSC:
the grading of the Hilbert space and the infinitesimal generator $G$ acting consistently with the grading: \eqref{grading}, \eqref{Sgrading}, \eqref{Agrading}.

\begin{proposition}[\bf GSC from RSC]
\label{prop:gsc_from_rsc}
If there exist two positive non-decreasing sequences $d_n$ and $c_n$ such that
\begin{equation}
\label{sequences}
d_n<\infty,
\qquad
\sum_{n=1}^\infty c_n^{-1} =\infty,
\end{equation}
and for any $n,m\in\N$ and $\psi_m\in \cC_m$, $\varphi_n\in\cC_n$ the following bounds hold:
\begin{align}
\label{gsc2}
\abs{(\psi_m, A_{m,n}\varphi_n)}^2
\le
\left(\delta_{m,n}d_n+ (1-\delta_{m,n})c_n\right)
(\psi_m, S_{m,m} \psi_m) (\varphi_n, S_{n,n} \varphi_n),
\end{align}
then the conditions of Theorem \ref{thm:rsc} hold with $\wt\cC=\oplus_{n=1}^\infty \cH_n$ (no closure!).
\end{proposition}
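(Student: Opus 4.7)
The task is to verify the two hypotheses of Theorem~\ref{thm:rsc} for $\wt\cC:=\oplus_{n=1}^\infty\cH_n$: pointwise convergence $B_\lambda\varphi\to B\varphi$ on $\wt\cC$, and essential skew self-adjointness of the limit operator $B$. The first is a routine spectral-calculus exercise built on the block-diagonal form of $S$; the second is the substantive step, and is precisely where the Carleman-type summability $\sum c_n^{-1}=\infty$ enters.

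\textbf{Construction of $B$ and pointwise convergence.} By polarization (exactly as in the remark after Theorem~SSC) the bound \eqref{gsc2} is equivalent to each block
\begin{equation*}
B_{m,n}:=S_{m,m}^{-1/2}A_{m,n}S_{n,n}^{-1/2}:\cH_n\to\cH_m
\end{equation*}
being bounded with $\norm{B_{m,n}}^2\le\delta_{m,n}d_n+(1-\delta_{m,n})c_n$ and supported on $|m-n|\le r$. For $\varphi_n\in\cH_n$ set $B\varphi_n:=\sum_{m:|m-n|\le r}B_{m,n}\varphi_n$ and extend by linearity to $\wt\cC$; skew-Hermiticity is immediate from $B_{m,n}^*=-B_{n,m}$. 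Since $S$, and hence $(\lambda I+S)^{-1/2}$, are block-diagonal, for $\varphi_n\in\cH_n$
\begin{equation*}
B_\lambda\varphi_n=\sum_{m:|m-n|\le r}T^{(m)}_\lambda\,B_{m,n}\,T^{(n)}_\lambda\varphi_n,\qquad T^{(k)}_\lambda:=S_{k,k}^{1/2}(\lambda I+S_{k,k})^{-1/2}.
\end{equation*}
Each $T^{(k)}_\lambda$ is a contraction and, because $\Ker S_{k,k}=\{0\}$ for $k\ge1$, converges strongly to $I$ on $\cH_k$ by functional calculus; with $2r+1$ bounded summands this yields $B_\lambda\varphi\to B\varphi$ in $\cH$ for every $\varphi\in\wt\cC$.

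\textbf{Essential skew self-adjointness via truncation.} Let $P_N$ denote orthogonal projection onto $\oplus_{n=1}^N\cH_n$ and set $B^{(N)}:=P_NBP_N$, a bounded skew-adjoint operator generating the unitary group $U^{(N)}_t:=\exp(tB^{(N)})$. The plan is to show that $\{U^{(N)}_t\varphi\}_N$ is Cauchy in $\cH$ for every $t\in\R$ and every $\varphi\in\wt\cC$; the strong limit $U_t$ is then a unitary one-parameter group (unitarity passes to strong limits), its Stone generator $\wt B$ is skew self-adjoint and extends $B$, and so $B$ is essentially skew self-adjoint. By Duhamel, for $M\ge N+r$,
\begin{equation*}
\norm{(U^{(M)}_t-U^{(N)}_t)\varphi}\le\int_0^t\norm{Q_N\,B\,U^{(N)}_s\varphi}\,ds,\qquad Q_N:=I-P_N,
\end{equation*}
so the Cauchy claim reduces to a Jacobi-type propagation estimate: the mass of $U^{(N)}_s\varphi$ above level $N$ must tend to $0$ as $N\to\infty$ uniformly for $s\in[0,t]$.

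\textbf{Main obstacle: the propagation estimate.} This is the sole delicate step and is where $\sum c_n^{-1}=\infty$ is decisive. I would set up a weighted a priori bound using a block-diagonal weight $W=\sum_n w_n P_{\cH_n}$ with $w_n\uparrow\infty$ and $|w_{n+1}-w_n|\le c_n^{-1/2}$, a choice possible because $\sum c_n^{-1}=\infty$ together with the monotonicity of $c_n$ forces $\sum c_n^{-1/2}=\infty$. Then
\begin{equation*}
\frac{d}{ds}(WU^{(N)}_s\varphi,U^{(N)}_s\varphi)=([W,B^{(N)}]U^{(N)}_s\varphi,U^{(N)}_s\varphi),
\end{equation*}
and the diagonal blocks drop out since $[W,B_{n,n}]=0$ (this is exactly why the hypothesis on $d_n$ need only be finiteness), while each off-diagonal contribution is bounded by $|w_m-w_n|\norm{B_{m,n}}\le|w_m-w_n|\sqrt{c_n}$, which is uniformly bounded in $m,n$. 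The resulting $N$-uniform bound on $(WU^{(N)}_s\varphi,U^{(N)}_s\varphi)$ yields the required decay of the high-level mass by Chebyshev, closing the Cauchy argument and completing the proof.
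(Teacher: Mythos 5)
Your construction of the blocks $B_{m,n}$, of $B$ on $\wt\cC=\oplus_{n\ge1}\cH_n$, and the proof of the pointwise convergence $B_\lambda\varphi\to B\varphi$ via $S_{k,k}^{1/2}(\lambda I+S_{k,k})^{-1/2}\tostoptop I$ coincide with the paper's and are fine. The essential skew self-adjointness step, however, has two genuine problems. The first is conceptual: even if your truncation scheme worked, proving that $U^{(N)}_t=\exp(tB^{(N)})$ converges strongly to a unitary group $U_t=\exp(t\wt B)$ only produces \emph{some} skew self-adjoint extension $\wt B$ of $B$. That is strictly weaker than essential skew self-adjointness: a skew-Hermitian operator with equal nonzero deficiency indices (e.g.\ $d/dx$ on $C_c^\infty(0,1)$) admits many skew self-adjoint extensions while failing to be essentially skew self-adjoint. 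Theorem \ref{thm:rsc} really needs $\wt\cC$ to be a \emph{core} for the skew self-adjoint closure --- this is hypothesis (ii) of Lemma \ref{lem:strrescvg}, where density of $(\mu I-B)\wt\cC$ is what drives the resolvent convergence --- and nothing in your construction shows that $\wt\cC$ is a core for the Stone generator $\wt B$ you obtain.

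The second problem is quantitative: the propagation estimate does not close the Duhamel bound. To keep $\norm{[W,B^{(N)}]}$ bounded you must take $|w_{n+1}-w_n|\le c_n^{-1/2}$, so $w_n\sim\sum_{k\le n}c_k^{-1/2}$, and Chebyshev then gives $\norm{Q_NU^{(N)}_s\varphi}^2\lesssim 1/w_N$. But the Duhamel error $\norm{Q_NBU^{(N)}_s\varphi}$ carries the unbounded block norms $\norm{B_{m,n}}\le\sqrt{c_n}$ with $n$ near $N$, so you need $c_N\norm{(U^{(N)}_s\varphi)_N}^2\to0$, i.e.\ a weight of order $c_n$ rather than of order $\sum_{k\le n}c_k^{-1/2}$. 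For $c_n=n$, which is allowed by \eqref{sequences}, one has $w_n\sim\sqrt n$ while $c_n/w_n\sim\sqrt n\to\infty$, so the estimate fails. The paper sidesteps all of this by verifying the basic criterion \eqref{dense_range} directly with a Carleman-type argument: if $\varphi\perp\Ran(I-B)$, pairing with the truncations $\varphi^n$ of $\varphi$ itself yields $\norm{\varphi^n}^2=(\varphi_{n+1},B_{n+1,n}\varphi_n)\le\sqrt{c_n}\,\norm{\varphi_n}\,\norm{\varphi_{n+1}}$, hence $\norm{\varphi_n}^2+\norm{\varphi_{n+1}}^2\gtrsim c_n^{-1/2}\norm{\varphi}^2$ for all large $n$; summing and using that $\sum_n c_n^{-1}=\infty$ forces $\sum_n c_n^{-1/2}=\infty$ (by monotonicity of $c_n$) gives $\varphi=0$. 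This is a few lines, needs no dynamics, and is what your third and fourth paragraphs should be replaced by.
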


\section{Proofs}
\label{s:proofs}

\subsection{Proof of Theorem \ref{thm:rsc}}
\label{ss:proof_of_the_theorem}

Since the operators $B_\lambda$, $\lambda>0$, defined in \eqref{Blambda_def} are a priori and the operator $B$ is by assumption skew self-adjoint, we can define the following bounded operators (actually contractions):
\begin{align*}
&
K_\lambda:=(I-B_\lambda)^{-1},
&&
\norm{K_\lambda}\le1,
&&
\lambda>0,
\\
&
K:=(I-B)^{-1},
&&
\norm{K}\le1.
\end{align*}
Hence,  we can write the resolvent \eqref{resolvent_def} as
\begin{align}
\label{resolvent_master}
R_\lambda
=
(\lambda+S)^{-1/2}
K_\lambda
(\lambda+S)^{-1/2}.
\end{align}

\begin{lemma}
\label{lem:Klambdastcvg}
Assume that the sequence of bounded operators $K_\lambda$ converges in the strong operator topology:
\begin{align}
\label{Klambdastcvg}
K_\lambda\tostoptop K,
\qquad \text{as}\qquad\lambda\to0.
\end{align}
Then for any $f$ satisfying the $H_{-1}$-condition \eqref{conditionH-1},  \eqref{conditionA} and \eqref{conditionB} hold.
 \end{lemma}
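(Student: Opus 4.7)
The plan is to reduce everything to the master formula \eqref{resolvent_master}, which gives
\begin{align*}
u_\lambda = R_\lambda f = (\lambda I + S)^{-1/2} K_\lambda h_\lambda,
\qquad
h_\lambda := (\lambda I + S)^{-1/2} f,
\end{align*}
and then verify \eqref{conditionA} and \eqref{conditionB} by tracking the strong limits of the three factors. The $H_{-1}$-condition \eqref{conditionH-1} enters exactly at the point where we analyse the inner factor $h_\lambda$.

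First I would show that $h_\lambda$ converges strongly. By \eqref{conditionH-1}, $f\in\Ran(S^{1/2})$, so we can write $f=S^{1/2}g$ with $g=S^{-1/2}f$. Then $h_\lambda=(\lambda I + S)^{-1/2}S^{1/2}g$, and spectral calculus applied to $S$ (which has trivial kernel on the subspace we work in) together with dominated convergence — the function $s\mapsto s^{1/2}(s+\lambda)^{-1/2}$ is bounded by $1$ and tends to $1$ pointwise on $(0,\infty)$ as $\lambda\downarrow0$ — gives
\begin{align*}
h_\lambda \longrightarrow g = S^{-1/2}f
\qquad \text{strongly in } \cH.
\end{align*}

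Next I would combine this with the hypothesized strong convergence $K_\lambda\to K$. Since $\|K_\lambda\|\le 1$ uniformly, the standard two-term estimate
\begin{align*}
\norm{K_\lambda h_\lambda - Kg}
\le
\norm{K_\lambda(h_\lambda - g)} + \norm{(K_\lambda-K)g}
\le
\norm{h_\lambda - g} + \norm{(K_\lambda-K)g}
\end{align*}
shows that $K_\lambda h_\lambda \to Kg$ strongly in $\cH$.

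Finally I would apply the same splitting trick to the outer factor, once for each of the two conditions of Theorem KV. For \eqref{conditionA}, write
\begin{align*}
\lambda^{1/2}u_\lambda
=
\lambda^{1/2}(\lambda I + S)^{-1/2}(K_\lambda h_\lambda - Kg)
+
\lambda^{1/2}(\lambda I + S)^{-1/2}Kg;
\end{align*}
the first summand has norm at most $\|K_\lambda h_\lambda - Kg\|\to0$, and the second tends to $0$ because spectral calculus gives $\lambda^{1/2}(\lambda I + S)^{-1/2}\tostoptop 0$ on our subspace (again, $\Ker(S)$ has been removed). For \eqref{conditionB}, the identical splitting
\begin{align*}
S^{1/2}u_\lambda
=
S^{1/2}(\lambda I + S)^{-1/2}(K_\lambda h_\lambda - Kg)
+
S^{1/2}(\lambda I + S)^{-1/2}Kg
\end{align*}
together with the spectral fact $S^{1/2}(\lambda I + S)^{-1/2}\tostoptop I$ (and the uniform bound $\|S^{1/2}(\lambda I + S)^{-1/2}\|\le1$) yields
\begin{align*}
\lim_{\lambda\to0} S^{1/2}u_\lambda = Kg = K S^{-1/2}f =: v \in\cH.
\end{align*}
This is \eqref{conditionB}, and together with the previous step establishes the lemma.

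The only point requiring real care, rather than routine verification, is the interplay between the strong convergence $K_\lambda\to K$ and the fact that we are feeding in a $\lambda$-dependent argument $h_\lambda$; this is precisely what makes the uniform bound $\|K_\lambda\|\le 1$ (coming from skew self-adjointness of $B_\lambda$) indispensable. Everything else is bookkeeping with spectral calculus on $S$.
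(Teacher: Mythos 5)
Your proposal is correct and follows essentially the same route as the paper: both start from the factorization \eqref{resolvent_master}, use the $H_{-1}$-condition to write $f=S^{1/2}g$, and combine the spectral facts $\lambda^{1/2}(\lambda I+S)^{-1/2}\tostoptop 0$, $S^{1/2}(\lambda I+S)^{-1/2}\tostoptop I$ with the hypothesis $K_\lambda\tostoptop K$ to conclude, arriving at the same limit $v=Kg$. The only difference is that you spell out the uniform-boundedness bookkeeping needed to pass strong limits through the product, which the paper leaves implicit.
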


\begin{proof}
From the spectral theorem applied to the self-adjoint operator $S$, it is obvious that
\begin{align}
\label{llambdaS}
&
\norm{\lambda^{1/2}(\lambda+S)^{-1/2}}\le1,
&&
\lambda^{1/2}(\lambda+S)^{-1/2}\tostoptop 0,
\\
\label{SlambdaS}
&
\norm{S^{1/2}(\lambda+S)^{-1/2}}\le1,
&&
S^{1/2}(\lambda+S)^{-1/2}\tostoptop I,
\end{align}

By condition \eqref{conditionH-1} we can write
\begin{align*}
f = S^{1/2} g
\end{align*}
with some $g\in\cH$.  Now,  using \eqref{resolvent_master}, we get
\begin{align}
\label{lambdaulambda}
\lambda^{1/2}u_\lambda
&=
\lambda^{1/2}(\lambda+S)^{-1/2}
K_\lambda
(\lambda+S)^{-1/2}S^{1/2} g,
\\
\label{Sulambda}
S^{1/2}u_\lambda
&=
S^{1/2}(\lambda+S)^{-1/2}
K_\lambda
(\lambda+S)^{-1/2}S^{1/2} g.
\end{align}
From \eqref{Klambdastcvg}, \eqref{lambdaulambda}, \eqref{Sulambda}, \eqref{llambdaS} and \eqref{SlambdaS}, we readily get \eqref{conditionA} and \eqref{conditionB} with
\begin{align*}
v=Kg.
\end{align*}
\end{proof}

In the next lemma, we formulate a sufficient condition
for \eqref{Klambdastcvg} to hold. This is reminiscent of Theorem VIII.25(a) from \cite{reed_simon_vol1_vol2_75}:

\begin{lemma}
\label{lem:strrescvg}
Let $B_n$, $n\in\N$, and $B=B_\infty$ be densely defined closed operators over the Hilbert space $\cH$. Assume that
\begin{enumerate}[(i)]
\item
Some (fixed) $\mu\in\C$ is in the resolvent set of all operators $B_n$,
$n\le\infty$, and
\begin{align}
\label{unifbound}
\sup_{n\le\infty}\norm{(\mu I - B_n)^{-1}}<\infty.
\end{align}

\item
There is a dense subspace $\wt\cC\subseteq\cH$ which is a core for $B_\infty$ and $\wt\cC\subseteq\Dom(B_n)$, $n<\infty$, such that for all $\wt h\in\wt\cC$:
\begin{align}
\label{stcvgoncore}
\lim_{n\to0}\norm{B_n \wt h-B \wt h}=0.
\end{align}
\end{enumerate}
Then
\begin{align}
\label{strescvg}
(\mu I - B_n)^{-1}
\tostoptop
(\mu I - B)^{-1}.
\end{align}
\end{lemma}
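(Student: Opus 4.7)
The plan is to use the standard three-epsilon strategy for strong resolvent convergence: establish the limit on a dense subspace where the computation is transparent, then extend to all of $\cH$ using the uniform bound in hypothesis (i).

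First I would identify a convenient dense subspace on which to verify pointwise convergence, namely
\[
\cD:=(\mu I-B)\wt\cC\subseteq\cH.
\]
The key observation is that $\cD$ is dense in $\cH$. Indeed, since $\mu$ is in the resolvent set of $B$, we have $(\mu I-B)\Dom(B)=\cH$; and since $\wt\cC$ is a core for $B$, for any $h\in\Dom(B)$ one can find $\wt h_k\in\wt\cC$ with $\wt h_k\to h$ and $B\wt h_k\to Bh$, hence $(\mu I-B)\wt h_k\to (\mu I-B)h$. So $\cD$ is dense in $\Ran(\mu I-B)=\cH$.

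Next I would verify strong convergence on $\cD$ by the standard resolvent identity trick. For $f=(\mu I-B)\wt h$ with $\wt h\in\wt\cC$, write
\[
(\mu I-B_n)^{-1}f-(\mu I-B)^{-1}f
=
(\mu I-B_n)^{-1}f-\wt h
=
(\mu I-B_n)^{-1}\bigl(f-(\mu I-B_n)\wt h\bigr)
=
(\mu I-B_n)^{-1}(B_n-B)\wt h.
\]
By hypothesis (ii), $\norm{(B_n-B)\wt h}\to 0$, and by hypothesis (i) the operator norm of $(\mu I-B_n)^{-1}$ is uniformly bounded by some $M<\infty$. Hence the right-hand side tends to $0$ in norm.

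Finally I would extend to all $f\in\cH$ by a $3\eps$ argument: given $f\in\cH$ and $\eps>0$, pick $f'\in\cD$ with $\norm{f-f'}<\eps$, then
\[
\norm{(\mu I-B_n)^{-1}f-(\mu I-B)^{-1}f}
\le
M\norm{f-f'}
+
\norm{(\mu I-B_n)^{-1}f'-(\mu I-B)^{-1}f'}
+
M\norm{f'-f},
\]
where I used the uniform bound $M$ for both $\norm{(\mu I-B_n)^{-1}}$ and $\norm{(\mu I-B)^{-1}}$. The middle term tends to $0$ by the previous step, so $\limsup_n\norm{(\mu I-B_n)^{-1}f-(\mu I-B)^{-1}f}\le 2M\eps$, and sending $\eps\to 0$ yields \eqref{strescvg}.

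The only step requiring care is the density of $\cD$, which rests on the assumption that $\wt\cC$ is a \emph{core} for $B$ (not merely contained in its domain); this is precisely why the hypothesis is phrased that way. Everything else is routine once the uniform resolvent bound (i) is in hand.
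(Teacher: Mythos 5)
Your proof is correct and follows essentially the same route as the paper: pass to the dense subspace $(\mu I-B)\wt\cC$ (dense because $\wt\cC$ is a core and $\mu$ is in the resolvent set), use the identity $\{(\mu I-B_n)^{-1}-(\mu I-B)^{-1}\}(\mu I-B)\wt h=(\mu I-B_n)^{-1}(B_n\wt h-B\wt h)$ together with hypotheses (i) and (ii), and then extend by the uniform bound. The paper compresses the density check and the final $3\eps$ step into one line each, but the argument is identical.
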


\begin{proof}
Since $\wt\cC$ is a core for the densely defined closed operator $B$ and $\mu$ is in the resolvent set of $B$, the subspace
\begin{align*}
\wh{\cC}:=\{\wh h=(\mu I  - B) \wt h\,:\, \wt h\in\wt\cC\}
\end{align*}
is dense in $\cH$. Thus, for any $\wh h$ from this dense subspace, we have
\begin{align*}
\big\{(\mu I - B_n)^{-1}-(\mu I - B)^{-1}\big\} \wh h =
(\mu I-B_n)^{-1}(B_n \wt h-B \wt h) \to0,
\end{align*}
due to \eqref{unifbound} and \eqref{stcvgoncore}. Using again \eqref{unifbound}, we conclude \eqref{strescvg}.
\end{proof}

Putting Lemmas \ref{lem:Klambdastcvg} and \ref{lem:strrescvg} together, we obtain Theorem \ref{thm:rsc}.
\hfill
\qed

\subsection{Proof of Proposition \ref{prop:gsc_from_rsc}}

Let
\begin{align*}
\wt\cC=\oplus_{n=1}^\infty\cH_n.
\end{align*}
Note that there is no closure of the orthogonal sum on the right hand side. Then the operator $B=S^{-1/2}AS^{-1/2}$ is defined on $\wt\cC$ and is graded as
\begin{align*}
&
B=\sum_{ \substack{m,n\ge1 \\ \abs{n-m}\le r} } B_{m,n},
&&
B_{m,n}:\cH_n\to\cH_m,
&&
B_{m,n}:=S_{m,m}^{-1/2}A_{m,n}S_{m,m}^{-1/2},
&&
B_{m,n}^*=-B_{n,m}.
\end{align*}
Indeed, due to \eqref{gsc2}
\begin{align}
\label{B-norms}
\norm{B_{n,m}}\le \delta_{m,n}d_n+ (1-\delta_{m,n})c_n.
\end{align}
The operator $B:\wt\cC\to\wt\cC$ is clearly skew-Hermitian. In order to prove that it is actually essentially skew self-adjoint we have to check \eqref{dense_range}.

For $\varphi\in\cH$ we use the notation
\begin{align*}
\varphi=(\varphi_1,\varphi_2, \dots),
\qquad
\varphi^n:=(\varphi_1,\varphi_2, \dots, \varphi_n,0,0,\dots).
\end{align*}

In order to simplify the notation in the forthcoming argument we assume that $r=1$. The cases with $r>1$ are done exactly the same way, only notation becomes heavier.

Assume $\varphi \perp \Ran (I-B)$, then
\begin{align*}
0
=
(\varphi, (I-B) \varphi^n)
=
\norm{\varphi^n}^2 -(\varphi_{n+1}, B_{n+1,n} \varphi_n).
\end{align*}
Hence, by \eqref{B-norms} and letting $n$ so large that $\norm{\varphi^n}^2\ge\norm{\varphi}^2/2$,
\begin{align*}
\norm{\varphi_n}^2 + \norm{\varphi_{n+1}}^2
\ge
\frac{2}{c_n} \norm{\varphi^n}^2
\ge
\frac{1}{c_n}  \norm{\varphi}^2.
\end{align*}
Summing over $n$ we obtain that $\varphi=0$. This implies that $\Ran(I-B)$ is dense in $\cH$. Identical argument works for $\Ran(I+B)$. This proves \eqref{dense_range} and $B$ is indeed essentially skew self-adjoint on $\wt\cC$.

Checking condition \eqref{stcvgoncore} is done exactly like in \eqref{strongconvergence}:
\begin{align*}
(B_\lambda)_{m,n}
&
=
S_{m,m}^{1/2}(\lambda I_{m,m}+ S_{m,m})^{-1/2} B_{m,n} S_{n,n}^{1/2}(\lambda I_{n,n}+ S_{n,n})^{-1/2}
\\[8pt]
&
\tostoptop B_{m,n},
\end{align*}
as $\lambda\to0$, since $\norm{B_{m,n}}<\infty$ and $S_{m,m}^{1/2}(\lambda I_{m,m}+ S_{m,m})^{-1/2} \tostoptop I_{m,m}$.
\hfill
\qed

\vskip2cm

\noindent
{\bf Acknowledgement:}
This work was supported by OTKA (Hungarian
National Research Fund) grant K 100473 and
by the grant T\'AMOP -- 4.2.2.B--10/1--2010-0009.

\end{document}